\numberwithin{equation}{section}
\def\C{{\mathbb C}}
\def\Z{{\mathbb Z}}
\def\R{{\mathbb R}}
\newcommand{\Cal}{\mathcal}
\newcommand{\sminus}{\smallsetminus}
\def\res{\mathrm{Res}}
\newcommand{\pp}[1]{\frac{\partial}{\partial #1}}
\newcommand{\tpp}[1]{\tfrac{\partial}{\partial #1 \vphantom{\hat \bX}}}
\newcommand{\bX}{\bm{X}}
\newcommand{\bY}{\bm{Y}}
\newcommand{\bZ}{\bm{Z}}
\theoremstyle{plain}
\newtheorem{lemma}{Lemma}[section]
\newtheorem{proposition}[lemma]{Proposition}
\newtheorem{theorem}[lemma]{Theorem}
\newtheorem{corollary}[lemma]{Corollary}
\theoremstyle{definition}
\newtheorem{definition}[lemma]{Definition}
\newtheorem{example}[lemma]{Example}
\newtheorem{remark}[lemma]{Remark}
\theoremstyle{remark}
\newtheorem{problem}[lemma]{Problem}
\newcommand{\fch}{\color{black}\xspace}
\title[Universal unfolding of vector fields in one variable]{On the universal unfolding of vector fields in one variable: A proof of Kostov's theorem}
\author[M. Klime\v{s}]{Martin Klime\v{s}}
\address{Martin Klime\v s\\ ORCiD: 0000-0002-3228-164X.}
\email{klmm@seznam.cz}
\author[C. Rousseau]{Christiane Rousseau}
\address{Christiane Rousseau, D\'epartement de
math\'ematiques et de statistique, Universit\'e de Montr\'eal, C.P. 6128,
Succursale Centre-ville, Montr\'eal (Qc), H3C 3J7, Canada\\
ORCiD: 0000-0002-2021-620X.}
\email{rousseac@dms.umontreal.ca}
\thanks{The second author is supported by NSERC in Canada. }
\subjclass[2010]{37F75, 32M25, 32S65, 34M99} 
\begin{document}

\begin{abstract} In this note we present variants of Kostov's theorem on a versal deformation of a parabolic point of a complex analytic $1$-dimensional vector field. First we provide a self-contained proof of Kostov's theorem, together with a proof that this versal  
deformation is indeed universal. We then generalize to the real analytic and formal cases, where we show universality, and to the $\Cal C^\infty$ case, where we show that only versality is possible. 
\end{abstract}

\keywords{Normal forms of analytic vector fields; unfolding of singularities; versal deformations.}

\maketitle

\section{Introduction} Let us consider a singular point of a germ of analytic vector field $\bX $ on $(\C,0)$. If the singular point is simple, then the germ of vector field is analytically linearizable. If the singular point is multiple,  also called \emph{parabolic}, then the vector field is analytically conjugate to any one of the following normal forms:
\begin{align} 
\bX(x) &=\left( x^{k+1}-\mu x^{2k+1} \right)\pp{x},\label{nf0_1}\\
\bX(x) &= \frac{x^{k+1}}{1+\mu x^k}\pp{x},\label{nf0_2}
\end{align}
where $\mu=\res_{x=0}\bX^{-1}$ is the residue of the dual form.
The first normal form is more frequent in the older works of the Russian school. The second one is easier to manipulate; 
for instance, the rectifying coordinate (time coordinate) $\int\bX^{-1}$ is simple to calculate.

The next natural question is to consider normal forms for unfoldings of germs of analytic vector fields at a singular point. 
When the singular point  is simple the normal form of an unfolding is linear, and hence unique. When the singular point is parabolic, Kostov proved that the following standard deformation of \eqref{nf0_1} is versal \cite{Kostov}:
\begin{equation} 
\bX_1(x;y) =\left( x^{k+1}+y_{k-1} x^{k-1} + \ldots +y_1x+y_0 - (\mu + y_{2k+1}) x^{2k+1}\right)\pp{x}.\label{nf_1}\end{equation}
The proof uses that \eqref{nf_1} is an infinitesimal deformation of \eqref{nf0_1}, and then calls for the machinery of Martinet's Reduction Lemma (see for instance \cite{Arnold0}). 
The philosophy behind this normal form is two-fold. First, a parabolic point of codimension $k$ is the merging of $k+1$ simple singular points, each having its own eigenvalue, which is an analytic invariant. Hence it is natural that a full unfolding would involve $k+1$ parameters. Second, the geometry of an unfolding of a parabolic point is simple, hence the convergence to the normal form. 

Kostov's normal form is very important for many bifurcations problems. For instance, when one studies the unfolding of a parabolic point of a germ of $1$-diffeomorphism of $(\C,0)$ (i.e. a multiple fixed point), then a formal normal form is given by the time one map of a vector field of the form \eqref{nf_1}. The change of coordinate to this normal form diverges and the obstruction to the convergence is the classifying object of the unfoldings (see for instance \cite{MRR}, \cite{Rousseau}, and \cite{Ribon-f, Ribon-a}). The same normal form is used to classify germs of unfoldings of 2-dimensional vector fields in $(\C^2,0)$ with either a saddle-node or a resonant saddle point: indeed the vector fields are orbitally analytically equivalent if and only if the holonomy map of the separatrices (the strong separatrices in the case of a saddle-node) are conjugate (see \cite{RC} and \cite{Rousseau-Teyssier}). 

Very soon, other normal forms equivalent to \eqref{nf_1} appeared in the literature without proof:
\begin{align} 
\bX_2(x;y) &= \left(x^{k+1}+y_{k-1} x^{k-1} + \ldots +y_1x+y_0\right)\!\left(1 - (\mu + y_{2k+1}) x^k\right)\pp{x},\label{nf_2}\\
\bX_3(x;y) &= \frac{x^{k+1}+y_{k-1} x^{k-1} + \ldots +y_1x+y_0}{1+(\mu+y_{2k+1}) x^k}\pp{x},\label{nf_3}
\end{align}
and they are all called \emph{Kostov's theorem}. In practice, most authors use the normal form \eqref{nf_3}, which is much more suitable for computations. 

The paper \cite{Rousseau-Teyssier} indirectly suggests that the normal form \eqref{nf_3} is universal by showing that the normal form \eqref{nf_3} associated to a generic $k$-parameter unfolding of a parabolic point of codimension $k$ is unique up to the action of the group $\Z/k\Z$ of rotations of order $k$. This uniqueness property is extremely important in all classification problems of unfoldings under conjugacy or analytic equivalence: it shows that the parameters of the normal forms are essentially unique and hence analytic invariants of the unfoldings. 
Hence, to show that two unfoldings are analytically equivalent, the first step is to change to the canonical parameters and it then suffices to study the equivalence problem for fixed values of the parameters. 

In this paper, we provide self-contained proofs that the three normal forms \eqref{nf_1}, \eqref{nf_2} and \eqref{nf_3} are  unique up to the action of the group $\Z/k\Z$, and universal.
These self-contained proofs are useful for further generalizations, for instance when the vector field has some symmetry or reversibility property, and also for the formal case, the $\Cal C^\infty$-case, and mixed cases where the variable is analytic and the dependence on the parameters is only real-analytic: this mixed case occurs when one considers bifurcations of antiholomorphic parabolic fixed points (i.e. $f(x) = \bar x \pm \bar x^{k+1} + o(\bar x^{k+1})$).

As a second part of the paper, we briefly address the real analytic, formal,  and smooth cases. 
In the first two cases,  each of the corresponding unfoldings is universal.
In the smooth case, we give an explicit example showing that the unfolding is only versal and cannot be universal, namely the two vector fields 
$\bX(x;\lambda)=(x^2+\lambda^2)\tpp{x}$, and
$\bX'(x;\lambda)=(x^2+(\lambda+\omega(\lambda))^2)\tpp{x}$ are $\Cal C^\infty$-conjugate when  $\omega(\lambda)$ is infinitely flat at $\lambda=0$.
Let us explain one difference with the analytic case.  In the latter case the eigenvalues at the singular points are complex $\Cal C^1$-invariants and 
for a given set of $k+1$ eigenvalues at the singular points, there are only a finite number of solutions for the $y_j$ in any of the normal forms \eqref{nf_1}, \eqref{nf_2} and \eqref{nf_3} with the prescribed eigenvalues. 
In the smooth case, only the eigenvalues at the real singular points are $\Cal C^1$-invariants and we can smoothly glue anything at the complex singular points. The two systems of our counterexample have  purely imaginary singular points. An open question is to know if we have universal unfoldings in the smooth case when all the singular points are real.

The original articles \cite{Kostov, Kostov-real} of Kostov cover a much more general case of deformations of differential forms of real power  $\alpha$. 
However, our goal is not to redo what has been done well by Kostov, but to provide an elementary and self-contained proof in the case of vector fields, that is power $\alpha=-1$, which is why we do not discuss the other cases. Nevertheless, we believe that our proof of the uniqueness in the formal/analytic case, which is missing in Kostov's article, could be well adapted to general $\alpha$.

\section{The analytic theory}

The following definitions are classical: see for instance \cite{Arnold1}.

\medskip

\begin{definition}~
\begin{enumerate}[wide=0pt, leftmargin=\parindent]
	\item  Two germs of analytic (resp. real analytic, formal, $\Cal C^\infty$) parametric families of vector fields $\bX(x;\lambda)$, $\bX'(x';\lambda)$ depending on a same parameter $\lambda$
	are \emph{conjugate} if there exists an  analytic (resp. real analytic, formal, $\Cal C^\infty$) invertible change of coordinate
	\begin{equation}\label{eq:transformation}
	 x'=\phi(x;\lambda), 
	\end{equation} 
	changing one family to the other. We write $\bX=\phi^*\bX'$  as a \emph{pullback} of $\bX'$. 

	\item Let $\lambda\mapsto \lambda'=\psi(\lambda)$ be a germ of analytic (resp. real analytic, formal, $\Cal C^\infty$) map (not necessarily invertible), then $\bX(x,\lambda)=\bX'(x,\psi(\lambda))$ is a family \emph{induced} from $\bX'$.	
	
	\item A parametric family of vector fields $\bX(x;\lambda)$ is a \emph{deformation} of $\bX(x;0)$. Two deformations $\bX(x;\lambda)$, $\bX'(x,\lambda)$ of the same initial vector field $\bX(x;0)=\bX'(x;0)$ with the same parameter $\lambda$ are \emph{equivalent} (as deformations) if the two families are conjugate by means of an invertible transformation \eqref{eq:transformation} with $\phi(x;0)\equiv x$.
	
	\item A deformation $\bX'(x,\lambda')$ of $\bX'(x,0)$ is \emph{versal} if any other deformation $\bX(x,\lambda)$ of $\bX'(x,0)= \bX(x,0)$ is equivalent to one induced from it. 
	It is \emph{universal} if the inducing map $\lambda'=\psi(\lambda)$ is unique.  
\end{enumerate}	
\end{definition}

In this section we provide a self-contained proof of the following theorem:

\begin{theorem}\label{thm1} 
In the analytic case, for $k\geq 1$, the deformation \eqref{nf_3} of \eqref{nf0_2} is universal.
\end{theorem}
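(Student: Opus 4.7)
The proof splits into (I) versality: any deformation is equivalent to one induced from $\bX_3$; and (II) uniqueness of the inducing map $\psi$.

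For (I), let $\bX(x;\lambda) = f(x;\lambda)\pp{x}$ be the given deformation. Weierstrass preparation applied to $f$ yields $f = u\cdot P$, where $P(x;\lambda)$ is a Weierstrass polynomial of degree $k+1$ in $x$ with $P(x;0) = x^{k+1}$, and $u$ is an analytic unit with $u(x;0) = (1+\mu x^k)^{-1}$. An analytic translation $x \mapsto x - \alpha(\lambda)$ with $\alpha(0)=0$ kills the $x^k$-coefficient in $P$, bringing it to $P(x;\lambda) = x^{k+1} + y_{k-1}(\lambda)x^{k-1} + \ldots + y_0(\lambda)$ with $y_j(0)=0$. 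This identifies the candidate values of $y_0,\ldots,y_{k-1}$ and reduces the problem to taming the unit $u$.

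The main analytic step is to find $\phi(x;\lambda)$ with $\phi(x;0)=x$ and $y_{2k+1}(\lambda)$ with $y_{2k+1}(0)=0$ (plus small corrections to $y_0,\ldots,y_{k-1}$) such that $\phi^*\bX_3(\cdot;y(\lambda)) = u(\cdot;\lambda)P(\cdot;\lambda)\pp{x}$. Equivalently, in terms of dual time forms, $\phi^*\tau_3 = \tau_\bX$, where $\tau_\bX = \frac{dx}{uP}$ and $\tau_3 = \frac{1+(\mu+y_{2k+1})x^k}{P(x;y)}dx$. Apply Weierstrass division $1/u = R\cdot P + S$ with $S$ a polynomial of degree $\leq k$ and $R$ an analytic remainder; this gives $\tau_\bX = \frac{S(x)}{P(x)}dx + R(x)dx$, decomposing $\tau_\bX$ into a rational part with the same pole structure as $\tau_3$ and a holomorphic part. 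The residues of the rational part at the $k+1$ roots of $P$ are analytic invariants, and matching them with those of $\tau_3$ yields analytic equations that determine the parameters $y$; the holomorphic remainder $R(x)\,dx$, which does not appear in $\tau_3$, is absorbed by $\phi$ via integration. Assembling these produces the required $\phi$ and $y$.

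For (II), suppose $\bX_3(\cdot;y) = \phi^*\bX_3(\cdot;y')$ with $\phi(x;0)=x$ and $y(0)=y'(0)=0$. Let $a_0,\ldots,a_k$ be the roots of $P(\cdot;y)$, with eigenvalues $\nu_i = P'(a_i;y)/Q(a_i;y)$ where $Q = 1+(\mu+y_{2k+1})x^k$. Conjugacy preserves the multiset $\{\nu_i\}$. By the residue theorem applied to $\frac{Q(x)}{P(x)}dx$ on $\CP^1$, $\sum_i 1/\nu_i = \mu+y_{2k+1}$, forcing $y_{2k+1}=y'_{2k+1}$. Matching the rest, $y'$ must lie in the orbit of $y$ under the symmetry $x\mapsto \zeta x$ ($\zeta^k=1$) of the family $\bX_3$, which acts on parameters by $y_j \mapsto \zeta^{j-1}y_j$. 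The constraint $\phi(x;0)=x$ excludes nontrivial rotations, and continuity in $\lambda$ then forces $y'=y$.

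The main obstacle is the analytic convergence in (I): while a formal normalization is straightforward, showing it converges requires the parabolic point to have simple geometry. Here the fact that at $\lambda=0$ there is a single parabolic orbit that splits cleanly under unfolding ensures no modulus obstructions (no horn maps, no Ecalle--Voronin invariants in the parameter direction), and the cokernel of the adjoint action of $\bX(x;0)$ is precisely spanned by the $k+1$ normal-form directions $y_0,\ldots,y_{k-1},y_{2k+1}$. An explicit self-contained argument for this convergence — replacing the appeal to Martinet's Reduction Lemma used in \cite{Kostov} — is what the paper intends to supply.
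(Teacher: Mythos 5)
There is a genuine gap in part (I), at precisely the step you flag in your closing paragraph as ``what the paper intends to supply.'' Your reduction via Weierstrass preparation, the translation killing the $x^k$-term, and the absorption of the holomorphic remainder $R(x)\,dx$ by a conjugacy all match the paper's Proposition~\ref{prop:prenormal} and Lemma~\ref{lemma}. The problem is the rational part: after division you are left with $\frac{S(x;\lambda)}{P(x;\lambda)}dx$ with $S=s_0+\ldots+s_kx^k$, and you must reduce the $k+1$ coefficients of $S$ to the two-term numerator $1+(\mu+y_{2k+1})x^k$ of \eqref{nf_3} while adjusting $P$. Matching residues at the roots of $P$ cannot do this analytically in $\lambda$: the roots and the residues at them are branched along the discriminant locus, which passes through $\lambda=0$, and exactly on that locus the number of independent residue conditions drops (at $\lambda=0$ the single residue at the $(k{+}1)$-fold root sees only $s_k$, leaving $s_1,\ldots,s_{k-1}$ unconstrained), so the count ``$k+1$ residues versus $k+1$ parameters'' fails precisely where it is needed. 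Moreover, a meromorphic primitive of the remaining discrepancy has poles at the simple roots of $P$ which the vector field, vanishing only to order $1$ there, cannot compensate, so the flow mechanism of Lemma~\ref{lemma} does not absorb it. The paper's replacement for this step is Proposition~\ref{prop:normalisation}: one joins the two denominators by a segment $t\in[0,1]$, writes the infinitesimal conjugacy condition as the homological equation $H\partial_xP-P\partial_xH+Q\Xi=UP$, in which the unknowns include corrections $\xi_j$ to the parameters $y_j$ as well as a polynomial $H$, checks that the associated linear system is invertible at $y=u=0$ (hence nearby), and integrates the resulting vector field in $(x,t,y)$-space. No residue computation and no splitting of roots is involved; letting the transformation move the parameters is what makes the linear algebra work.

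Part (II) also rests on an unproved claim: that the multiset of eigenvalues $\{\nu_i\}$ determines $(y_0,\ldots,y_{k-1},y_{2k+1})$ up to the $\Z/k\Z$-rotation. This is exactly \cite[Theorem~3.5]{Rousseau-Teyssier} and is not easier than what you are trying to prove; moreover it only constrains the parameters, not the conjugacy, whereas universality also requires knowing that every conjugacy between normal forms which fixes $\lambda$ is a rotation composed with a flow map (otherwise ``continuity in $\lambda$ forces $y'=y$'' does not follow). The paper proves both points at once and self-containedly in Theorem~\ref{theorem:Kostov}(ii): after using the residue at infinity to identify $\mu$, it composes $\phi$ with a flow map $\exp(-t(\lambda)\bX)$ chosen by the implicit function theorem so that the $(k{+}1)$-st $x$-derivative of the composition vanishes at $x=0$, and then shows by infinite descent in the powers of the ideal $\Cal I_\lambda$ that the resulting map is the identity. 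Your observation that $\sum_i 1/\nu_i=\mu+y_{2k+1}$ via the residue theorem is correct and is the analogue of the paper's use of $\res_{x=\infty}\bX^{-1}$, but the remainder of the uniqueness argument still needs to be supplied.
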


\begin{corollary}\label{cor:Kostov} 
In the analytic case, for $k\geq 1$, the deformations  \eqref{nf_1} and \eqref{nf_2} of \eqref{nf0_1} are universal.
\end{corollary}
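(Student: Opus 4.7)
The plan is to deduce Corollary~\ref{cor:Kostov} from Theorem~\ref{thm1} by establishing, for each $i\in\{1,2\}$, that the natural induction germ $\Psi_i$ from the parameter space of $\bX_i$ to that of $\bX_3$ is an analytic local diffeomorphism at $0$. The strategy rests on the classical fact that the unperturbed forms \eqref{nf0_1} and \eqref{nf0_2} are analytically conjugate (both being standard models for a parabolic point of codimension $k$ with residue $\mu$), so that Theorem~\ref{thm1}, after transport by this conjugacy, yields a universal deformation of \eqref{nf0_1}.

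The families $\bX_1$ and $\bX_2$ are themselves deformations of \eqref{nf0_1}, and Theorem~\ref{thm1} produces a unique analytic germ $\Psi_i\colon(\C^{k+1},0)\to(\C^{k+1},0)$ such that $\bX_i$ is equivalent as a deformation to the family induced from $\bX_3$ by $\Psi_i$. Once $\Psi_i$ is known to be a local diffeomorphism, universality of $\bX_i$ follows immediately: for any deformation $\bX(\cdot;\lambda)$ of \eqref{nf0_1}, the unique induction $\psi_3$ to $\bX_3$ given by Theorem~\ref{thm1} yields $\Psi_i^{-1}\circ\psi_3$ as the unique induction to $\bX_i$.

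Since $\Psi_i(0)=0$, the remaining task is to verify that $D\Psi_i(0)$ is invertible. I would compute this derivative via infinitesimal deformations modulo the Lie bracket action. Let $\bX_0$ denote the unperturbed vector field. By a Martinet-style reduction, the cokernel of $[\cdot,\bX_0]$ on germs of vector fields vanishing at $0$, further quotiented by the preparation direction $x^k\pp{x}$, has basis $\{x^j\pp{x}: 0\le j\le k-1\}\cup\{x^{2k+1}\pp{x}\}$. For $\bX_1$, the infinitesimal deformations are exactly these basis elements: $\tpp{y_j}\bX_1|_{y=0} = x^j\pp{x}$ and $\tpp{y_{2k+1}}\bX_1|_{y=0} = -x^{2k+1}\pp{x}$. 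For $\bX_3$, the expressions $\tfrac{x^j}{1+\mu x^k}\pp{x}$ and $-\tfrac{x^{2k+1}}{(1+\mu x^k)^2}\pp{x}$, expanded as power series and reduced modulo the image, have the same leading monomials, while the only surviving corrections come from terms of the form $\mu^m x^{j+mk}$ with $j+mk=2k+1$ (for instance $j=1$, $m=2$, producing $\mu^2 x^{2k+1}$). These contribute only strictly lower-triangular entries in the $x^{2k+1}$-row of the change-of-basis matrix, so $D\Psi_1(0)$ is lower triangular with $1$'s on the diagonal and hence invertible. The argument for $\Psi_2$ is parallel, using the expansion of $1-(\mu+y_{2k+1})x^k$ in place of $(1+(\mu+y_{2k+1})x^k)^{-1}$.

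The principal obstacle I foresee is carrying out the cohomological reduction cleanly: one must verify that the cokernel of $[\cdot,\bX_0]$ really has the stated $(k+1)$-element basis once the preparation direction $x^k\pp{x}$ is removed, and check that every higher-order correction in the expansions above drops into the image (so that the triangular-with-units structure is preserved). Once this bookkeeping is done, universality of $\bX_1$ and $\bX_2$ collapses to a finite-dimensional linear-algebra verification, which completes the deduction of Corollary~\ref{cor:Kostov} from Theorem~\ref{thm1}.
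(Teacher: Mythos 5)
Your proposal is correct and follows essentially the same route as the paper: both deduce the corollary from the universality of \eqref{nf_3} by taking the (unique) inducing map $\psi$ onto the parameters of $\bX_3$ and checking that $D\psi(0)$ is invertible, finding in both cases a unipotent lower-triangular matrix whose only nontrivial entry sits in the $x^{2k+1}$-row and $y_1$-column (the paper gets $\psi_{2k+1}=y_{2k+1}-4\mu^2y_1\bmod\Cal I_y^2$). The only difference is presentational: the paper performs the first-order computation by expanding the conjugacy equation modulo $\Cal I_y^2$ and comparing coefficients of $x^j$ (keeping track of the conjugacy $\phi(x,0)=x+\tfrac{\mu^2}{k}x^{2k+1}+O(x^{2k+2})$ between \eqref{nf0_1} and \eqref{nf0_2}, which also feeds into that off-diagonal entry), whereas you phrase the same computation as a reduction in the cokernel of $\operatorname{ad}_{\bX_0}$; since the extra contributions land only strictly below the diagonal, your conclusion is unaffected.
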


As explained in the introduction, the proof of the versality is due to Kostov (for \eqref{nf_1} see \cite{Kostov},
while for \eqref{nf_3} it has been often stated in literature without explicit proof, see e.g. \cite[p.116]{Arnold}), 
and the uniqueness comes from \cite{Rousseau-Teyssier}. 
Theorem~\ref{thm1} can be rephrased in more precise terms as the following theorem of which it is a direct consequence.

\begin{theorem}\label{theorem:Kostov}~
\begin{enumerate}[label=(\roman*), leftmargin=\parindent, itemindent=0em, itemsep=4pt, topsep=4pt, partopsep=0pt, parsep=0pt]
\item \textnormal{(Kostov \cite{Kostov})} Any  analytic germ of a family of vector fields $\tilde \bX(x,\lambda)$ depending on a multi-parameter $\lambda$ unfolding $\tilde \bX(x;0)=x^{k+1}\frac{1}{\omega(x)}\pp{x}$, $\omega(0)\neq 0$, $k\geq 0$, is analytically conjugate
to a family of the form 
	\begin{align} 	\label{Kostov_form0}
	\bX(x;\lambda)&=c(\lambda)x\pp{x}, & k&=0,\\
	\bX(x;\lambda)&=\frac{x^{k+1}+y_{k-1}(\lambda)x^{k-1}+\ldots+y_0(\lambda)}{1+\mu(\lambda)x^k}\pp{x},& k&\geq 1,
	\label{Kostov_form}
	\end{align}
with $y_0(0)=\ldots =y_{k-1}(0)=0$, where
\[
\mu(\lambda)=-\res_{x=\infty}\bX(x;\lambda)^{-1}
\]	
is the sum of the residues of $\tilde\bX(x;\lambda)^{-1}$ over its local polar locus around the origin.

\item  \textnormal{(Rousseau, Teyssier \cite[Theorem 3.5]{Rousseau-Teyssier}, \cite[Theorem 7.2]{Klimes-Rousseau})}
The normal form \eqref{Kostov_form0} for $k=0$ and \eqref{Kostov_form} for $k=1$ are unique, while the normal form
\eqref{Kostov_form} for $k > 1$ is unique up to the action of $x\mapsto e^{2\pi i\frac{l}{k}}x$, $l\in\Z_k$,
$$y_{j}(\lambda)\mapsto e^{-2\pi i\frac{(j-1)l}{k}}y_{j}(\lambda),\quad j=0,\ldots,k-1.$$
More precisely, if $x\mapsto\phi(x,\lambda)$ is a transformation  between two vector fields \eqref{Kostov_form}, fixing $\lambda$,
then 
\[
\phi(x,\lambda)=\begin{cases} e^{t(\lambda)}x, & \text{if }\ k=0,\\
 e^{2\pi i\frac{l}{k}} \exp(t(\lambda)\bX)(x;\lambda),  & \text{if }\ k\geq 1,\end{cases}
\]
for some $l\in\Z_k$ and some analytic germ $t(\lambda)$.
\end{enumerate}	
\end{theorem}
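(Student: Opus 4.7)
For part (i), the strategy is a standard deformation-theoretic argument culminating in an application of the analytic implicit function theorem. By the known normal-form theorem for a single parabolic germ, I may assume at $\lambda=0$ that $\tilde\bX$ coincides with $\frac{x^{k+1}}{1+\mu_0 x^k}\pp{x}$. Applying Weierstrass preparation to the function defining $\tilde\bX$, I write
\[
\tilde\bX(x,\lambda) = \frac{P(x,\lambda)}{\Omega(x,\lambda)}\pp{x}
\]
with $P$ a Weierstrass polynomial of degree $k+1$ in $x$ and $\Omega$ an analytic unit. A small analytic translation $x\mapsto x+\tau(\lambda)$ with $\tau(0)=0$ then eliminates the $x^k$-coefficient of $P$. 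The remaining task is to find an analytic transformation $\phi(x,\lambda)$ (with $\phi(x,0)=x$) and analytic parameters $y(\lambda)=(y_0,\ldots,y_{k-1}),\mu(\lambda)$ such that $\phi^*\bX_{y,\mu}=\tilde\bX$, where $\bX_{y,\mu}$ denotes the normal form \eqref{Kostov_form}.

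I would solve this equation in suitable Banach spaces of germs of holomorphic functions on a small polydisk, applying the analytic implicit function theorem at the base point $\lambda=0$, where the trivial choice $\phi\equiv x$, $y\equiv 0$, $\mu\equiv\mu_0$ is a solution. The essential step is showing that the linearization is a linear isomorphism. Writing $\phi=x+\epsilon h(x)$ and allowing infinitesimal variations $\epsilon z_j$ of $y_j$ and $\epsilon\nu$ of $\mu$, the first-order change in $\phi^*\bX_{y,\mu}$ splits into a Lie-bracket contribution $[\epsilon h\pp{x},\bX_0]$ and an infinitesimal parameter contribution. A direct computation with $\bX_0$ shows that Lie brackets with $h\pp{x}$ (for $h$ analytic) span all monomials $x^m\pp{x}$ with $m\ge k$ modulo a single resonance at $m=2k+1$ which is matched by varying $\mu$, while the parameters $y_0,\ldots,y_{k-1}$ span the missing low-order monomials $x^0,\ldots,x^{k-1}$. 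This is essentially Kostov's infinitesimal versality computation. The main obstacle is then to promote this formal statement into quantitative Banach-space estimates uniform in $\lambda$, which is standard but requires care.

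For part (ii), the $k=0$ case reduces immediately to the linearization theorem for a simple singular point. For $k\ge 1$, fix $\lambda$ and suppose $\phi^*\bX'=\bX$ for two vector fields in normal form \eqref{Kostov_form} with parameters $(y,\mu)$ and $(y',\mu')$. The sum $\mu$ is the total residue of $\bX^{-1}$ over its finite polar locus, hence an intrinsic local analytic invariant, so $\mu=\mu'$. Likewise, $\phi$ induces a bijection between the root sets of $P$ and $P'$ preserving the eigenvalues at singular points. Introducing the multi-valued rectifying time $T(x)=\int\bX^{-1}$, the conjugation equation translates into $T'(\phi(x))=T(x)+c$ for a constant $c$.

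To conclude, I would argue that via the functional equation $T'(\phi(x))=T(x)+c$, the germ $\phi$ extends by analytic continuation along paths avoiding the singular points to a single-valued meromorphic map between open subsets of $\CP^1$, and that such an extension is forced to be a M\"obius transformation. The structural rigidity of the normal form — specifically the shape $1+\mu x^k$ of the denominator (whose zeros lie on a cyclic orbit of the rotation $x\mapsto e^{2\pi i/k}x$) combined with the vanishing of the $x^k$-coefficient of $P$ — then restricts $\phi$ to the stated form $x\mapsto e^{2\pi il/k}\exp(t\bX)(x)$, and the induced action $y_j\mapsto e^{-2\pi i(j-1)l/k}y_j$ follows by direct computation. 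The main obstacle in this step is to make the global extension precise and to pass from generic $\lambda$ (with simple distinct roots) to all small $\lambda$ by continuity, for which I would follow the strategy of \cite{Rousseau-Teyssier}.
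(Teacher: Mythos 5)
Your part (i) follows Kostov's original route (infinitesimal versality plus an implicit-function-theorem argument in spaces of germs), which is precisely the machinery this paper sets out to avoid. The paper instead argues in two elementary steps: a prenormal form $\frac{P(x;\lambda)}{Q(x;\lambda)}\pp{x}$ obtained from Weierstrass preparation/division together with the homotopy Lemma~\ref{lemma} (if $\bX_1=\bX_0/(1+\bX_0.\alpha)$, the time-one flow of an explicit vector field in $(x,t)$ conjugates them), and then a second homotopy (Proposition~\ref{prop:normalisation}) killing the middle coefficients of $Q$, where the homological equation $H\tpp{x}P-P\tpp{x}H+Q\Xi=UP$ is a \emph{finite-dimensional} linear system in the coefficients of the polynomials $H$ and $\Xi$, invertible for small $(y,u)$ because the matrix at $y=u=0$ is explicitly triangular with nonzero diagonal; no Banach-space estimates are needed. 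Your linearization computation (brackets $[h\pp{x},\bX_0]$ span the $x^m\pp{x}$ with $m\ge k$ except the resonance at $m=2k+1$, absorbed by varying $\mu$) is correct as far as it goes, but you explicitly defer the passage from infinitesimal surjectivity to an actual conjugacy, so as written (i) is an outline rather than a proof; it could in principle be completed, but at the cost of exactly the analytic machinery the paper replaces.

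In part (ii) there is a genuine error. You claim that the functional equation $T'(\phi(x))=T(x)+c$ forces $\phi$ to continue to a single-valued meromorphic map of $\CP^1$ which must be a M\"obius transformation. This contradicts the very statement being proved: for any germ $t(\lambda)$ the flow map $\exp(t\bX)$ conjugates $\bX$ to itself and satisfies $T\circ\exp(t\bX)=T+t$, yet already for $\bX=x^3\pp{x}$ (the case $k=2$, $\mu=0$) one has $\exp(t\bX)(x)=x(1-2tx^2)^{-1/2}$, which is neither M\"obius nor single-valued on $\CP^1$ minus the singular locus; the continuation $x\mapsto (T')^{-1}(T(x)+c)$ is genuinely multivalued because $(T')^{-1}$ is. So the global-rigidity step collapses, and with it your derivation of the form $e^{2\pi il/k}\exp(t(\lambda)\bX)$. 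The paper's uniqueness proof is entirely different and local: after matching residues to get $\mu=\mu'$ and normalizing the linear part of $\phi(\cdot;0)$ to a $k$-th root of unity, it applies the implicit function theorem to $K(t,\lambda)=\partial_x^{k+1}\big(\exp(-t\bX)\circ\phi\big)\big|_{x=0}$ to choose $t(\lambda)$ with $K(t(\lambda),\lambda)\equiv0$, and then shows that $\psi=\exp(-t(\lambda)\bX)\circ\phi$ is the identity by infinite descent on the powers of the ideal $\Cal I_\lambda$ of functions vanishing at $\lambda=0$, comparing coefficients of $x^j$ in the transport equation. To salvage your approach you would need either such a local algebraic argument or the full orbital/period analysis of Rousseau--Teyssier, which is substantially more delicate than an appeal to M\"obius rigidity.
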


The first step in proving Theorem~\ref{theorem:Kostov} is the following ``prenormal form'', which can be also found for example in \cite[Proposition~5.13]{Ribon-f}.

\begin{proposition}[Prenormal form] \label{prop:prenormal}
Any germ of a family of vector fields $\tilde \bX(x,\lambda)$ depending on a multi-parameter $\lambda$ unfolding $\tilde \bX(x;0)=(cx^{k+1}+\ldots)\pp{x}$, $k\geq 1$, is analytically conjugate to a family of the form 
\begin{equation} 
\bX(x;\lambda)=\frac{x^{k+1}+y_{k-1}(\lambda)x^{k-1}+\ldots+y_0(\lambda)}{1+u_0(\lambda)+\ldots+u_{k-1}(\lambda)x^{k-1}+\mu(\lambda)x^k}\pp{x},
\label{pol_form}
\end{equation}
where $y_j(0)=0=u_j(0)$, $j=0,\ldots,k-1$, and
\[
\mu(\lambda)=-\res_{x=\infty}\bX(x;\lambda)^{-1}.
\]
\end{proposition}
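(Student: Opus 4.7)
The plan is to normalize $\tilde\bX$ in three stages: Weierstrass preparation with a Tschirnhaus shift to normalize the numerator of the dual form $\omega := dx/\tilde\bX$, Dulac's classical theorem at the slice $\lambda = 0$, and an analytic implicit function theorem to extend this normalization to nearby $\lambda$.

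Writing $\tilde\bX = f(x,\lambda)\,\partial_x$ with $f(\cdot,0)$ vanishing to order $k+1$, Weierstrass preparation yields $f = u \cdot \Pi$ with $\Pi = x^{k+1} + a_k(\lambda)x^k + \cdots + a_0(\lambda)$ a Weierstrass polynomial ($a_j(0) = 0$) and $u$ a holomorphic unit. A linear rescaling $x \mapsto \alpha x$ normalizes $u(0,0) = 1$, and the Tschirnhaus shift $x \mapsto x - a_k(\lambda)/(k+1)$ kills the $x^k$-coefficient, yielding $P(x,\lambda) = x^{k+1} + y_{k-1}(\lambda)x^{k-1} + \cdots + y_0(\lambda)$ with $y_j(0) = 0$; the dual form is then $\omega = g/P\,dx$ with $g = 1/u$ a unit and $g(0,0) = 1$, giving the required numerator.

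At $\lambda = 0$, $\tilde\bX(\cdot,0) = u(x,0)\,x^{k+1}\,\partial_x$ is a parabolic germ of codimension $k$ with leading coefficient $1$, so by Dulac's classical analytic normalization theorem there exists an analytic local biholomorphism $\phi_0(y)$ fixing $0$, $\phi_0'(0) = 1$, conjugating it to $\phi_0^*\tilde\bX(\cdot,0) = y^{k+1}/(1 + \mu(0)\,y^k)\,\partial_y$, with $\mu(0) = \res_{y=0}\tilde\bX(y,0)^{-1}$.

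The main step is then to extend $\phi_0$ to a family $\phi(y,\lambda)$ depending analytically on $\lambda$ with $\phi(y,0) = \phi_0(y)$, along with analytic germs $\tilde y_j(\lambda), u_j(\lambda)$ ($0 \le j \le k-1$) vanishing at $\lambda = 0$, and $\mu(\lambda)$ extending $\mu(0)$, such that $\phi^*\omega$ equals the rational form in \eqref{pol_form}. I plan to set this up as a nonlinear analytic equation in a Banach space of holomorphic germs $y \mapsto \phi(y,\lambda)$ with analytic dependence on $\lambda$, and invoke the analytic implicit function theorem anchored at the Dulac solution. The main obstacle is verifying invertibility of the linearization at $\lambda = 0$: this reduces to a linear first-order equation on germs modulo $P(y,0) = y^{k+1}$, whose solvability relies on the $2k+1$ free parameters $(\tilde y_j, u_j, \mu)$ precisely matching the finite-order obstructions of the infinitesimal coordinate-change action at the Dulac form, with the $\Z/k\Z$-ambiguity of Dulac's form fixed by a standard auxiliary normalization.
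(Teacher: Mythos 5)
Your first stage (Weierstrass preparation plus the Tschirnhaus shift killing the $x^k$-coefficient, and the rescaling normalizing the unit) is fine and matches what the paper does implicitly. The genuine gap is in your third stage, which is where essentially all of the analytic content of the proposition lives: you announce that you will ``set this up as a nonlinear analytic equation in a Banach space of holomorphic germs'' and ``invoke the analytic implicit function theorem,'' and you yourself identify the invertibility of the linearization as ``the main obstacle'' --- but you do not carry out any of it. Concretely: (a) germs do not form a Banach space, so you must fix a polydisc and control that the composition operator $\phi\mapsto g\circ\phi$ stays analytic between the chosen spaces; (b) the linearized operator at the Dulac solution, $v\mapsto v\,\tpp{x}f_0-f_0\,\tpp{x}v$ with $f_0=x^{k+1}/(1+\mu(0)x^k)$, supplemented by the $2k+1$ parameter directions, is \emph{not} injective (the time-$t$ flow direction and the redundancy of the $u_j$'s in \eqref{pol_form} both lie in the kernel), so you cannot invoke the inverse function theorem as stated; you need surjectivity plus a bounded right inverse, or a slice condition, and the surjectivity itself is a statement about the cokernel of the homological equation that must be proved with estimates, not just checked on jets. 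None of this is impossible, but as written the proposal defers exactly the step that constitutes the proof.

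For comparison, the paper's route avoids all of this functional analysis and is worth internalizing because it recurs throughout the paper. After Weierstrass preparation gives the numerator $P$, one applies the Weierstrass \emph{division} theorem to the numerator of the dual form $\tilde\bX^{-1}$, writing it as $Q+PR$ with $\deg Q\le k$; the dual form then splits as
\[
\tilde\bX^{-1}=\tfrac{Q}{P}\,dx+R\,dx=\bX^{-1}+d\alpha,\qquad \alpha=\int R\,dx,
\]
i.e.\ $\tilde\bX=\bX/(1+\bX.\alpha)$, and the exact remainder $d\alpha$ is removed in one stroke by the explicit flow of $\bY=\pp{t}-\frac{\alpha\bX}{1+t\bX.\alpha}$ (Lemma~\ref{lemma}, the path method). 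This is entirely constructive, needs no normalization theorem at $\lambda=0$ as a separate input (Dulac's theorem is recovered as the $\lambda=0$ slice), and sidesteps both the kernel and the surjectivity issues. If you want to salvage your approach, the honest version of your step 3 essentially forces you to solve the same homological equation that the paper solves later, in Proposition~\ref{prop:normalisation}, by an explicit finite-dimensional linear system --- so you would be better off adopting the division-plus-flow argument for the prenormal form and reserving the linearization analysis for that later step.
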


\begin{proof} 
First, let us transform $\tilde \bX(x;0)=x^{k+1}\frac{1}{\omega(x)}\pp{x}$ to a form $\bX(x;0)=\frac{x^{k+1}}{1+\mu(0)x^k}\pp{x}$. 
Up to a linear change $x\mapsto ax$, $a\in\C\sminus\{0\}$, we can assume $\omega(0)=1$. 
Write $\omega(x)=1+\omega_1x+\ldots+\omega_kx^k+x^{k+1}r(x)$, 
let $\mu(0):=\omega_k$, and let 
$\alpha(x):=\int\tilde \bX(x;0)^{-1}-\bX(x;0)^{-1}=-\tfrac{\omega_1}{k-1}x^{1-k}-\ldots-\frac{\omega_{k-1}}{1}x^{-1}+\int_0^xr(x)dx$. 
Then $\alpha$ is a  meromorphic germ with pole of order at most $k-1$ at the origin, and the desired transformation is provided by Lemma~\ref{lemma} below.
	
By Weierstrass preparation and division theorem, any family	$\tilde \bX(x,\lambda)$ can be written in the form
$\tilde \bX(x,\lambda)=\frac{P(x;\lambda)}{Q(x;\lambda)+P(x,\lambda)R(x,\lambda)}\pp{x}$, for some Weierstrass polynomials $P(x;\lambda)=x^{k+1}+y_{k-1}(\lambda)x^{k-1}+\ldots+y_0(\lambda)$,
$Q(x;\lambda)=1+u_0(\lambda)+\ldots+u_{k-1}(\lambda)x^{k-1}+\mu(\lambda)x^k$, and some analytic germ $R(x;\lambda)$. Let $\alpha(x,\lambda)=\int R(x;\lambda)dx$, then
\[\tilde \bX(x,\lambda)=\frac{\bX(x,\lambda)}{1+\bX(x,\lambda).\alpha(x,\lambda)}\] 
for $\bX(x;\lambda)=\frac{P(x;\lambda)}{Q(x;\lambda)}\pp{x}$ of the form \eqref{pol_form}.
The result follows from Lemma~\ref{lemma}.
\end{proof}

The following lemma is classical (see for example  \cite[Proposition 2.2]{Teyssier} to which it is essentially equivalent).

\begin{lemma}\label{lemma}
	Let $\bX_0,\ \bX_1$ be two germs of analytic families of vector fields vanishing at the origin, and assume there exists an analytic germ $\alpha(x,\lambda)$ such that $\bX_1=\frac{\bX_0}{1+\bX_0.\alpha}$.
	Then the flow map of the vector field $\bY(x,t;\lambda)=\pp{t}-\frac{\alpha \bX_0}{1+t\bX_0.\alpha}$
	\[	 \phi_1(x,\lambda)=x\circ\exp(\bY)\Big|_{t=0}, \]
	conjugates $\bX_1$ with $\bX_0=\phi_1^*\bX_1$. 
	
The statement is also true if $\alpha$ is meromorphic such that $\bX_0.\alpha$ and $\alpha\bX_0$ are analytic and vanish for $(x,\lambda)=0$ (so that the flow of $\bY$ is defined for all $t\in[0,1]$).
\end{lemma}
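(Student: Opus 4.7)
The natural approach is Moser's homotopy method. Interpolate between $\bX_0$ and $\bX_1$ by the family
\[
\bX_t = \frac{\bX_0}{1 + t\,\bX_0.\alpha}, \qquad t \in [0,1],
\]
which starts at $\bX_0$ and ends at $\bX_1$, and seek a family $\phi_t$ of $\lambda$-fibered diffeomorphisms with $\phi_0 = \mathrm{id}$ and $\phi_t^*\bX_t = \bX_0$. The specialization at $t=1$ then gives the desired conjugation. The vector field $\bY$ in the statement is simply the suspension on the extended space $(x,t,\lambda)$ of the infinitesimal generator $\bY_t$ of such a family, and $\phi_1(x,\lambda) = x\circ \exp(\bY)|_{t=0}$ is just the time-$1$ flow reinterpreted as solving the corresponding non-autonomous ODE for $\phi_t$.

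The cleanest verification proceeds via the dual time $1$-form $\omega_t := \bX_t^{-1}$ (viewed fiberwise, at fixed $\lambda$). A direct computation yields the remarkable identity
\[
\omega_t \;=\; \omega_0 \,+\, t\,d\alpha,
\]
so the interpolation of the time forms is affine in $t$. Because every $1$-form on a $1$-dimensional fiber is closed, Cartan's formula reduces to $\Cal L_{\bY_t}\omega_t = d\bigl(\iota_{\bY_t}\omega_t\bigr)$, and plugging in $\bY_t = -\frac{\alpha \bX_0}{1+t\bX_0.\alpha}\fpart{x}$ gives $\iota_{\bY_t}\omega_t = -\alpha$ by a one-line calculation. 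Combined with $\fpart{t}\omega_t = d\alpha$ and the standard Lie derivative identity
\[
\frac{d}{dt}\bigl(\phi_t^*\omega_t\bigr) \;=\; \phi_t^*\bigl(\Cal L_{\bY_t}\omega_t + \fpart{t}\omega_t\bigr),
\]
the right-hand side vanishes, so $\phi_t^*\omega_t \equiv \omega_0$, which in the $1$-dimensional fiber is equivalent to $\phi_t^*\bX_t = \bX_0$.

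The main point to check is that the flow of $\bY$ is actually defined on a common neighborhood of the origin for the full interval $t\in[0,1]$. In the analytic case this is automatic: since $\bX_0$ vanishes at $x=0$, so does the numerator $\alpha \bX_0$, and the denominator $1+t\,\bX_0.\alpha$ stays close to $1$ uniformly for $t\in[0,1]$ and $(x,\lambda)$ near $0$; thus $\bY_t$ is a germ of an analytic family of vector fields vanishing at $x=0$, so $\{x=0\}$ is fixed by the flow, and compactness of $[0,1]$ together with continuous dependence on the initial data yields the required domain. For the meromorphic extension, the stated hypothesis that $\alpha\bX_0$ and $\bX_0.\alpha$ are analytic and vanish at $(x,\lambda)=0$ is precisely what keeps $\bY_t$ regular on $[0,1]$ near the origin, and the same argument applies verbatim.
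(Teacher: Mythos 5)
Your proof is correct, and at heart it is the same path-method argument as the paper's: both interpolate by $\bX_t=\frac{\bX_0}{1+t\bX_0.\alpha}$ and integrate the very same vector field $\bY$. The difference lies in how the key identity is verified. The paper checks directly that $[\bY,\bX_t]=0$ as vector fields on the extended $(x,t)$-space, so the flow of $\bY$ transports $\bX_{t+s}$ back to $\bX_t$. You instead dualize to the time form $\omega_t=\bX_t^{-1}$, observe that the homotopy becomes affine, $\omega_t=\omega_0+t\,d\alpha$, and run Moser's computation $\frac{d}{dt}(\phi_t^*\omega_t)=\phi_t^*\big(\Cal L_{\bY_t}\omega_t+\fpart{t}\omega_t\big)=\phi_t^*(-d\alpha+d\alpha)=0$. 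This is an equivalent and arguably more transparent route --- it makes visible why $\alpha$ itself generates the conjugacy --- at the small cost of working with the meromorphic form $\omega_t$ off the zero locus of $\bX_0$ and extending the resulting identity $\phi_t^*\bX_t=\bX_0$ there by analyticity. One small inaccuracy in your domain discussion: $\bX_0$ is only assumed to vanish at the origin $(x,\lambda)=(0,0)$, not on all of $\{x=0\}$ (e.g.\ $\bX_0=(x^{k+1}+y_0(\lambda))\pp{x}$), so $\{x=0\}$ need not be invariant under the flow of $\bY$. This is harmless, because the argument you actually invoke --- that $\bY-\pp{t}$ vanishes at the origin uniformly in $t\in[0,1]$, so by continuous dependence the time-one flow is defined on a germ neighborhood --- is exactly right, and is also what the stated hypotheses in the meromorphic case are designed to guarantee.
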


\begin{proof}
	On the one hand, if $\bX_t:=\frac{\bX_0}{1+t\bX_0.\alpha}$ and $\bY=\pp{t}-\frac{\alpha \bX_0}{1+t\bX_0.\alpha}$
	are vector fields in $x,t,\lambda$, then $[\bY,\bX_t]=0$, which means that the flow $\exp(s\bY):(x,t)\mapsto(\Phi_s(x,t),t+s)$ of $\bY$ preserves $\bX_t=\Phi_s^*\bX_{t+s}$. In particular $\phi_s(x):=\Phi_s(x,0)$ is such that $\phi_s^*\bX_s=\bX_0$.
 \end{proof}

\begin{proposition}\label{prop:normalisation}
Consider two families of vector fields $\bX_0$ and $\bX_1$ of the form 
\begin{equation} 
\bX_t=\frac{x^{k+1}+y_{k-1}x^{k-1}+\ldots+y_0}{1+t(u_0+\ldots+u_{k-1}x^{k-1})+\mu x^k}\pp{x},
\label{pol_form2}
\end{equation}
depending on parameters $(y,u,\mu)$.
Then there exists an analytic transformation $(x,y)\mapsto\big(\phi(x,y;u,\mu),\psi(y;u,\mu)\big)$ tangent at identity at $(x,y)=0$ that  conjugates
$\bX_1$ to $\bX_0$.
\end{proposition}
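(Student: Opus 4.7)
The approach is the Moser path (homotopy) method. The family $\bX_t$ in the statement already provides an analytic interpolation between $\bX_0$ and $\bX_1$ for $t\in[0,1]$, so it suffices to construct a one-parameter family of fibered diffeomorphisms $\Phi_t:(x,y)\mapsto(\phi_t(x,y),\psi_t(y))$ with $\Phi_0=\mathrm{id}$ and $\Phi_t^{*}\bX_0=\bX_t$, realized as the flow of a time-dependent vector field
\[
\bm{V}_t\;=\;A_t(x,y)\,\pp{x}\;+\;\sum_{j=0}^{k-1} B_j(y,t)\,\pp{y_j};
\]
the fact that the $B_j$ do not depend on $x$ guarantees that the flow preserves the fibration by $y$. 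Differentiating the conjugation relation in $t$ converts it to the cohomological equation $[\bm{V}_t,\bX_t]=-\partial_t\bX_t$. Writing $\bX_t=f_t\,\pp{x}$ with $f_t=P/(Q_0+tU)$, where $Q_0:=1+\mu x^k$ and $U:=u_0+\cdots+u_{k-1}x^{k-1}$, and dividing by $f_t^2$, this becomes, on each fiber over $y$, the scalar identity of meromorphic functions of $x$
\[
\partial_x\!\left(\tfrac{A_t(Q_0+tU)}{P}\right)\;=\;-\tfrac{U}{P}\;+\;\tfrac{(Q_0+tU)\,\beta}{P^{2}},\qquad \beta(x;y,t):=\sum_{j=0}^{k-1}B_j(y,t)\,x^{j}.
\]

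The central step is solving this PDE with $A_t,B_j$ analytic and vanishing at the origin. For $h:=A_t(Q_0+tU)/P$ to be a single-valued meromorphic function of $x$ with only simple poles at the zeros $a_i(y)$ of $P$ (so that $A_t=hP/(Q_0+tU)$ is analytic), the residues of the right-hand side at each of the $k+1$ points $a_i$ must vanish. These conditions are linear in the $k$ unknowns $B_0,\ldots,B_{k-1}$; by the residue theorem on $\CP^1$---both summands decay as $O(x^{-2})$ and $O(x^{-3})$ at infinity and therefore contribute no residue there---the sum of the $k+1$ equations vanishes identically, yielding an effective $k\times k$ linear system. For $y$ off the discriminant locus of $P$ this system is non-degenerate by a Lagrange/Vandermonde argument on the interpolation map $\beta\mapsto(\beta(a_i))_{0\le i\le k}$. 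The analyticity of the solution $B_j$ across the discriminant---in particular at $y=0$ where the roots coalesce---follows from the symmetry of the construction under permutation of the $a_i$: the $B_j$ are symmetric in the roots, hence polynomial in their elementary symmetric functions, which up to sign are $y_0,\ldots,y_{k-1}$. A direct check shows $B_j(0,t)=0$, since the inhomogeneous data vanishes appropriately at $y=0$. Given $B_t$, integrating in $x$ yields $h$ as a meromorphic function with simple poles at the $a_i$, from which $A_t$ is analytic with $A_t(0,0,t)=0$.

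Finally, since $\bm{V}_t$ is an analytic time-dependent vector field vanishing at $(x,y)=(0,0)$ for every $t\in[0,1]$, its flow $\Phi_t$ is defined on a sufficiently small neighborhood of the origin and preserves the vertical fibration; hence $\Phi_t=(\phi_t,\psi_t)$ has the required form. The time-one map $(\phi,\psi):=\Phi_1$ conjugates $\bX_1$ to $\bX_0$ and, being obtained as the time-one map of a flow issued from the identity that fixes the origin, is tangent at the identity at $(x,y)=0$ as claimed. The main obstacle in the plan is precisely the analytic solvability of the residue system uniformly across the discriminant locus of $P$; this is the substantive linear-algebra step and is resolved by the invariant-theoretic reformulation via symmetric functions of the roots sketched above.
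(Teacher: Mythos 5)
Your overall strategy is the same as the paper's: a homotopy in $t$ realized by the flow of a commuting fibered vector field, leading to a cohomological equation; in fact your scalar identity is exactly the paper's homological equation \eqref{homological_eq} divided by $-P^2$ (set $H=A_t(Q_0+tU)$, $\Xi=\beta$, $Q=Q_0+tU$, $h=H/P$). The difference lies entirely in how the linear problem is solved, and that is where there is a genuine gap. You determine $\beta$ from the $k$ effective residue conditions at the roots $a_i(y)$ of $P$, claiming (a) non-degeneracy off the discriminant via a Vandermonde argument on the interpolation map $\beta\mapsto(\beta(a_i))_i$, and (b) analyticity across the discriminant from symmetry in the roots. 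Neither holds as stated. For (a), the residue of $(Q_0+tU)\beta/P^2$ at a simple root $a_i$ involves $\beta'(a_i)$ as well as $\beta(a_i)$ because of the double pole, so the relevant linear map is not the Lagrange interpolation map and its injectivity is unproved. For (b) --- the decisive point --- the residue system is only defined off the discriminant; its coefficients carry factors $1/P'(a_i)$ that blow up as roots collide, and in the confluent limit the correct solvability conditions are not residues at distinct points but higher-order (divided-difference) conditions: at $y=u=0$ the single residue condition at the $(k{+}1)$-fold root is automatically satisfied, while analyticity of $A_t$ there actually imposes the stronger requirement that the whole principal part of $h$ have order at most $k+1$. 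Consequently Cramer's rule produces $B_j$ that are symmetric \emph{rational} functions of the roots, i.e.\ rational in $y$ with possible poles along the discriminant --- which passes precisely through the point $y=0$ where analyticity is needed. Symmetry under permutation of the roots does not remove such poles; you would additionally need a boundedness argument or a reformulation of the conditions valid at confluence.

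The clean repair is exactly the paper's move: instead of imposing conditions at the (colliding) roots, posit $h=H/P$ with $H=h_0+\ldots+h_kx^k$ polynomial in $x$, so the equation becomes the polynomial identity \eqref{homological_eq}; comparing coefficients of $1,x,\ldots,x^{2k}$ gives a square $(2k+1)\times(2k+1)$ system in the coefficients of $(\Xi,H)$ whose matrix at $y=u=0$ is explicitly triangular with nonzero diagonal entries $1,\dots,1,k+1,\dots,2,1$, hence invertible for small $(y,u)$ uniformly in $(t,\mu)$ on compacts. This yields analyticity of $\bY$ across the discriminant with no extra work, together with the vanishing at $(x,y)=0$ needed for the flow to exist up to $t=1$.
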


\begin{proof}
Let $\bX_t=\frac{P(x,y)}{Q(x,t;u,\mu)}\pp{x}$ be as above \eqref{pol_form2}.	
We want to construct a family of transformations depending analytically on $t\in[0,1]$ between $\bX_0$ and $\bX_t$, defined by a flow of a vector field $\bY$ of the form
\[
\bY=\pp{t}+\sum_{j=0}^{k-1}\xi_j(t,y;u,\mu)\pp{y_j}+\frac{H(x,t,y;u,\mu)}{Q(x,t;u,\mu)}\pp{x},
\]
for some $\xi_j$ and $H$, such that $[\bY,\bX_t]=0$, that is
\[
-\frac{UP}{Q^2}+\frac{\Xi}{Q}+\frac{H}{Q}\pp{x}\left(\frac{P}{Q}\right)-\frac{P}{Q}\pp{x}\left(\frac{H}{Q}\right)=0,
\]
where $U(x;u)=u_0+\ldots+u_{k-1}x^{k-1}$ and $\Xi(x;\xi)=\xi_0+\ldots+\xi_{k-1}x^{k-1}$, which is equivalent to
\begin{equation} 
H\tpp{x}P-P\tpp{x}H+Q\Xi=UP.
\label{homological_eq}
\end{equation}
We see that we can choose $H$ as a polynomial in $x$:
\[
H=h_0(t,y;u,\mu)+\ldots+h_k(t,y;u,\mu)x^k.
\]
Write $UP=b_0(y;u)+\ldots+b_{2k}(y;u)x^{2k}$,
then the equation \eqref{homological_eq} takes the form of a non-homogeneous linear system for the coefficients $(\xi,h)$:
\[
A(t,y;u,\mu)\begin{pmatrix}\xi\\ h\end{pmatrix}=b(y;u).
\]
For $y=u=0$ the equation \eqref{homological_eq} is
\[
(k+1)x^kH-x^{k+1}\tpp{x}H+(1+\mu x^k)\Xi=0,
\]
hence
\[
A(t,0;0,\mu)=
\begin{pmatrix} 1&& &0&& &0\\ &\ddots& &&\ddots& &\\ &&1 &&&0 &0\\ \mu&& &k+1&& &0\\ &\ddots& &&\ddots& &\\ &&\mu &&&2 &0\\ 0&\ldots&0 &0&\ldots&0 &1 \end{pmatrix}.
\]
This means that $A(t,y;u,\mu)$ is invertible for $(t,\mu)$ from any compact in $\C\times\C$ if $|y|,\ |u|$ are small enough.
Since $b(0;0)=0$, the constructed vector field $\bY(x,t,y;u,\mu)$ is such that $\bY(0,t,0;0,\mu)=\pp{t}$ and its flow is well-defined for all $|t|\leq 1$ as long as  $|y|,\ |u|$ are small enough.
\end{proof}

\medskip

\begin{proof}[Proof of Theorem~\ref{theorem:Kostov}]~
\begin{enumerate}[label=(\roman*), leftmargin=\parindent, itemindent=0em, itemsep=4pt, topsep=4pt, partopsep=0pt, parsep=0pt]
	\item The existence of an analytic normalizing transformation to \eqref{Kostov_form} when $k\geq1$ follows directly from Propositions~\ref{prop:prenormal} and~\ref{prop:normalisation}. For $k=0$ it follows from Lemma~\ref{lemma}.
	
	\item Let us prove the uniqueness.
	For $k=0$ it is obvious. 
	For $k>0$, let $\phi(x;\lambda)$ be a transformation between 
	$\bX=\frac{P(x;\lambda)}{1+\mu(\lambda) x^k}\pp{x}$ and $\bX'=\frac{P'(x;\lambda)}{1+\mu'(\lambda) x^k}\pp{x}$, preserving the parameter $\lambda$, and such that $\phi^*\bX=\bX'$.
    By the invariance of the residue, $\mu(\lambda)=\mu'(\lambda)$.
    
    Let $\phi(x;0)=cx+\ldots$ for some $c\neq 0$; necessarily $c=e^{2\pi i\frac{l}{k}}$ for some $l\in\Z_k$. Up to precomposition with a map $x\mapsto cx$,
    we can assume that $c=1$ and that $\phi(x;0)=x+\ldots$ is tangent to identity.
    Let 
    $$G(x,t, \lambda)=\exp(-t \bX)\circ \phi(x,\lambda),$$ 
    and 
    $$K(t,\lambda)=\frac{\partial^{k+1}G}{\partial x^{k+1}}\big|_{x=0}.$$ 
    The map $K$ is analytic and $\frac{\partial K}{\partial t}(t,0)= -(k+1)!\neq0$. 
    For $\lambda=0$, there exists $t_0$ such that $K(t_0,0)=0$ (in fact $t_0=\frac{1}{(k+1)!}\frac{\partial^{k+1}\phi}{\partial x^{k+1}}(0;0)$ since  $\exp(t\bX(x;0))=x+tx^{k+1}+\ldots$). 
    By the implicit function theorem, there exists a unique function $t(\lambda)$ such that $K(t(\lambda), \lambda)\equiv 0$. Then considering the new transformation $\psi=\exp(-t(\lambda)X)\circ\phi$, it suffices to proves that $\psi\equiv id$. This is done by the infinite descent. 
    
    Let  $\psi(x,\lambda)=x+f(x;\lambda)$, where $\frac{\partial ^{k+1} f}{\partial x^{k+1}}\equiv 0$. 
      Denote $\Cal I_\lambda$ the ideal of analytic functions of $(x;\lambda)$ that vanish when $\lambda=0$. To show that $\psi(x,0)\equiv x$ it suffices to show that  $f\in\Cal I_\lambda^n$ for all $n$. 
     For $\lambda=0$ both vector fields $\bX(x;0)$ and $\bX'(x;0)$ are equal to $\frac{x^{k+1}}{1+\mu(0)x^k}\pp{x}$, 
     and it is easy to verify that  $\psi(x,0)\equiv x$ (for instance using power series), which gives us the induction hypothesis $f\in \Cal I_\lambda$. 
     
     Suppose now that $f\in \Cal I_\lambda^n$.
     Developing the right side of the transformation equation 
     \begin{equation*}
     \tfrac{P}{1+\mu x^k}\tpp{x}\psi=\tfrac{P'\circ\psi}{1+\mu\psi^k},
     \end{equation*} 
     we have
    \[\tfrac{P}{1+\mu x^k}\cdot\tpp{x}(x+f)=\tfrac{P'}{1+\mu x^k}+ f\cdot\tpp{x}\tfrac{P'}{1+\mu x^k}\mod \Cal I_\lambda^{n+1},\]
    from which
    \[P-P'=-x^{k+1}\tpp{x}f +f\cdot\big((k+1)x^k-\tfrac{k\mu(0)x^{2k}}{1+\mu(0)x^k}\big) \mod \Cal I_\lambda^{n+1}.\]
    The left side being a polynomial of order $\leq k-1$ in $x$, this means that both sides vanish modulo $\Cal I_\lambda^{n+1}$. 
   Therefore on the left side $P=P'\mod \Cal I_\lambda^{n+1}$, while the right side can be rewritten as 
    \[-(1+\mu(0)x^k)x\tpp{x}f +f\cdot ((k+1)+\mu(0)x^{k})\equiv 0 \mod \Cal I_\lambda^{n+1},\]
    Putting $f=\sum_{j=0}^\infty f_j x^j$, yields
    \[\sum_{j=0}^{k-1} (k+1-j)f_j x^{j} +\sum^\infty_{j=k} (k+1-j)(f_j+\mu(0)f_{j-k})x^{j} \equiv 0 \mod \Cal I_\lambda^{n+1},\]
    from which we get that all $f_j\in\Cal I_\lambda^{n+1}$ since $f_{k+1}\equiv0$. \qedhere
\end{enumerate}	
\end{proof}

\begin{proof}[Proof of Corollary~\ref{cor:Kostov}]
Consider the two families \eqref{nf_1} and \eqref{nf_3}
\begin{align*}
\bX_1(x;y) &= x^{k+1}+y_{k-1} x^{k-1} + \ldots +y_1x+y_0 - (\mu + y_{2k+1}) x^{2k+1}\pp{x},\\
\bX_3(x';y') &= \frac{x'^{k+1}+y'_{k-1} x'^{k-1} + \ldots +y'_1x'+y'_0}{1+(\mu+y'_{2k+1}) x'^k}\pp{x'}.
\end{align*}
By Theorem~\ref{theorem:Kostov} we know that there exists a map
\[x'=\phi(x,y),\quad y_j'=\psi_j(y),\ j=0,\ldots,k-1,\ 2k+1, \]
such that $\bX_1(x,y)=\phi^*\bX_3(x,\psi(y))$, that is, such that
\[\begin{split}
&\frac{\phi^{k+1}+\psi_{k-1} \phi^{k-1} + \ldots +\psi_1\phi+\psi_0}{1+(\mu+\psi_{2k+1}) \phi^k}= \\
&\qquad\qquad=\left(x^{k+1}+y_{k-1}x^{k-1}  + \ldots  +y_1 x+y_0 - (\mu  + y_{2k+1})x^{2k+1}\right)\pp{x}\phi.
\end{split}\]
We want to show that $\psi$ is invertible.

For $y=0$ we have 
\[\frac{\phi(x,0)^{k+1}}{1+\mu \phi(x,0)^k}=\left(x^{k+1}-\mu x^{2k+1}\right)\pp{x}\phi(x,0),\]
and (up to a pre-composition with a flow map of $\bX_1$ killing the term in $x^{k+1}$) we can assume that
\[\phi(x,0)=x+\mu^2\tfrac{1}{k}x^{2k+1}+O(x^{2k+2}).\] 
Write $\phi(x,y)=\phi(x,0)+f(x,y)$, with $f(x,y)=\sum_{l=0}^\infty f_l(y)x^l$, and denote $\Cal I_y$ the ideal of functions that vanish when $y=0$.
Then calculating modulo $\Cal I_y^2$:
\[\begin{split}
&\frac{\phi(x,0)^{k+1}+\psi_{k-1} \phi(x,0)^{k-1} + \ldots +\psi_1\phi(x,0)+\psi_0}{1+(\mu+\psi_{2k+1}) \phi(x,0)^k}\\
&\qquad +
\frac{(k+1)\phi(x,0)^k+\mu\,\phi(x,0)\fch^{2k}}{(1+\mu\,\phi(x,0)^k)^2}f(x,y)=\\
&=\left(x^{k+1}+y_{k-1}x^{k-1} + \ldots  +y_1 x+y_0 - (\mu  + y_{2k+1})x^{2k+1}\right)\pp{x}\phi(x,0)\\
&\qquad +\left(x^{k+1}-\mu x^{2k+1}\right)\pp{x}f(x,y) \mod\Cal I_y^2.
\end{split}\]
Comparing the coefficients of $x^j$, $j=0,\ldots,k-1$, on both sides we have
\[\psi_j=y_j\mod\Cal I_y^2,\quad j=0,\ldots,k-1,\]
for $j=k+1$
\[-\psi_1\mu +(k+1)f_1(y)=f_1(y) \mod\Cal I_y^2,\]
and for $j=2k+1$
\[\begin{split}
&-\mu-\psi_{2k+1}+\mu^2\tfrac{1}{k}\psi_1-(2k+1)\mu f_1+(k+1)f_{k+1}=\\
&\qquad =-\mu - y_{2k+1}+\mu^2\tfrac{2k+1}{k}y_1+(k+1)f_{k+1}-\mu f_1\mod\Cal I_y^2,
\end{split}\]
from which 
\[\psi_{2k+1}=y_{2k+1}-4\mu^2y_1 \mod\Cal I_y^2.\]
This means that the transformation $(x,y)\mapsto(\phi(x,y),\psi(y))$ is invertible for small $x,y$. 

Similarly for the families \eqref{nf_2} and \eqref{nf_3}
\end{proof}

\section{Real analytic, formal and smooth theory}

\begin{theorem}[Real analytic theory]\label{theorem:Kostov-real}~
The statement of Theorem~\ref{theorem:Kostov} is also true in the real analytic setting, 
with the exception that \eqref{Kostov_form} needs to be replaced by
\begin{equation} 	\label{Kostov_form-real}
\bX_{\text{real}}(x;\lambda)=\frac{x^{k+1}+y_{k-1}(\lambda)x^{k-1}+\ldots+y_0(\lambda)}{(\pm 1)^{k+1}+\mu(\lambda)x^k}\pp{x}.
\end{equation}
Consequently, the real analytic parametric family
\begin{equation} 
\bX_{3,\text{real}}(x;y) = \frac{x^{k+1}+y_{k-1} x^{k-1} + \ldots +y_0}{(\pm 1)^{k+1}+(\mu+y_{2k+1}) x^k}\pp{x},\label{nf_4}
\end{equation}
is a universal real analytic deformation for $\bX_{3,\text{real}}(x;0)$.
\end{theorem}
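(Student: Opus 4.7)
The plan is to replay the proofs of Propositions~\ref{prop:prenormal} and~\ref{prop:normalisation} and of Theorem~\ref{theorem:Kostov}, flagging the few places where the complex scalar field was genuinely used and adapting them to the real analytic category. Weierstrass preparation and division, the implicit function theorem, ODE flows with real analytic parameters, and the Lie-theoretic homological equation \eqref{homological_eq} all function identically over $\R$, so most of the machinery carries over without change.

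For the real prenormal form, the one step that uses complex scaling is the initial normalization of the leading coefficient $\omega(0)\in\R\sminus\{0\}$ of the denominator of $\tilde\bX(x;0)$: under $x\mapsto ax$ with real $a\neq 0$, this coefficient gets divided by $a^k$. When $k$ is odd, $a^k$ attains every nonzero real value and we can still normalize to $1$; when $k$ is even, $a^k>0$, so the sign of $\omega(0)$ is preserved, and the best we can do is to make the leading denominator coefficient equal to $\pm 1$, with the sign a real invariant. Packaging both cases, the normalized value is $(\pm 1)^{k+1}$, which is exactly the form \eqref{Kostov_form-real}. The rest of the argument for Proposition~\ref{prop:prenormal} (Weierstrass preparation, the meromorphic version of Lemma~\ref{lemma}) goes through verbatim. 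For Proposition~\ref{prop:normalisation}, the homological equation and the linear system with coefficient matrix $A(t,y;u,\mu)$ have real analytic entries, and $A(t,0;0,\mu)$ is visibly invertible for any real $\mu$; its flow at time one furnishes the required real analytic conjugation.

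For the uniqueness part, the whole argument of Theorem~\ref{theorem:Kostov}(ii)---the implicit function step producing $t(\lambda)$ so that $K(t(\lambda),\lambda)\equiv 0$, the reduction to $\psi=\exp(-t(\lambda)\bX)\circ\phi$ tangent to the identity, and the infinite descent modulo powers of the ideal $\Cal I_\lambda$ of real analytic germs vanishing at $\lambda=0$---is algebraic and works unchanged over $\R$. The only substantive change is that the discrete symmetry group is cut down to the subgroup of $\Z/k\Z$ of \emph{real} $k$-th roots of unity, i.e.\ trivial when $k$ is odd and $\{\pm 1\}$ when $k$ is even. Finally, universality of \eqref{nf_4} over \eqref{Kostov_form-real} is proved exactly as in Corollary~\ref{cor:Kostov}: after normalizing $\phi(x,0)=x+\mu^2\tfrac{1}{k}x^{2k+1}+O(x^{2k+2})$, comparing coefficients modulo $\Cal I_y^2$ yields $\psi_j=y_j$ for $j=0,\ldots,k-1$ and $\psi_{2k+1}=y_{2k+1}-4\mu^2 y_1$, so the inducing map is tangent to the identity and hence real analytically invertible near zero.

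The principal---really the only---obstacle is the bookkeeping around the sign $(\pm 1)^{k+1}$ when $k$ is even: one must verify that this sign is a real analytic invariant of $\bX$ (it is, since $\mathrm{sign}\,(\pm 1)^{k+1}=\mathrm{sign}\,\omega(0)$, which is preserved by every real rescaling $x\mapsto ax$), and that it is not disturbed either by the parametric normalization of Proposition~\ref{prop:normalisation} or by the passage from the prenormal form to \eqref{Kostov_form-real}. Beyond this sign tracking and the trimming of the symmetry group, the argument contains no new ideas relative to the complex analytic proof.
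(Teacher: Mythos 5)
Your proposal is correct and follows exactly the paper's route: the authors' own proof is just the observation that all transformations in Propositions~\ref{prop:prenormal} and~\ref{prop:normalisation} (and in the uniqueness argument of Theorem~\ref{theorem:Kostov}) stay real analytic when the data are real, the sole exception being the rescaling $x\mapsto ax$ normalizing the leading coefficient, which over $\R$ yields $(\pm1)^{k+1}$ when $k$ is even. Your additional remarks on the residual symmetry group (the real $k$-th roots of unity) and on replaying Corollary~\ref{cor:Kostov} for \eqref{nf_4} are accurate elaborations of what the paper leaves implicit.
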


\begin{proof}
Assuming the initial vector field $\tilde\bX$ is real analytic then so are all the transformations of Propositions~\ref{prop:prenormal} and \ref{prop:normalisation} with the only exception: the leading coefficient of $\tilde\bX(x;0)=\big(cx^{k+1}+\ldots\big)\pp{x}$ can be brought to either $c=\pm 1$ if $k$ is even, and $c=1$ if $k$ is odd. 	
\end{proof}

\begin{corollary} 
If we consider deformations which are symmetric  (resp. antisymmetric (also called reversible)) with respect to the real axis, then their associated universal deformations
\begin{align*} 
\bX_1'(x;y) &=c\left(x^{k+1}+y_{k-1} x^{k-1} + \ldots +y_1x+y_0 - c\,(\mu+y_{2k+1}) x^{2k+1}\right)\pp{x},\\
\bX_2'(x;y) &= c\left(x^{k+1}+y_{k-1} x^{k-1} + \ldots +y_1x+y_0\right)\!\left(1 - c\,(\mu+y_{2k+1}) x^k\right)\pp{x},\\
\bX_3'(x;y) &= \frac{x^{k+1}+y_{k-1} x^{k-1} + \ldots +y_1x+y_0}{c+(\mu+y_{2k+1}) x^k}\pp{x},
\end{align*}
$y_0,\ldots y_{k-1},cy_{2k+1}\in \R$, $c\mu\in\R$, with $c=(\pm 1)^{k+1}$ (resp. $c=i\,(\pm 1)^{k+1}$), have the same property, and the conjugacy commutes with the symmetry.
\end{corollary}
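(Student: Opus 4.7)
The plan is to reduce both cases to the real analytic Theorem~\ref{theorem:Kostov-real} already established, together with the argument of Corollary~\ref{cor:Kostov} that translates between the three equivalent normal forms. For the symmetric case, the assumption $\sigma^*\tilde\bX=\tilde\bX$ with $\sigma(x)=\bar x$ is precisely the statement that all Taylor coefficients of $\tilde\bX$ are real, hence $\tilde\bX$ is a real analytic germ. Applying Theorem~\ref{theorem:Kostov-real} yields a real analytic conjugacy $\phi$ to a normal form \eqref{Kostov_form-real} with $c=(\pm 1)^{k+1}$, and real analyticity of $\phi$ is exactly the commutation relation $\phi\circ\sigma=\sigma\circ\phi$. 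The passage to the equivalent forms $\bX_1'$ and $\bX_2'$ is obtained by the polynomial-and-implicit-function-theorem argument of Corollary~\ref{cor:Kostov}, which respects the real structure at every step.

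For the reversible case, the strategy is a time rescaling $\bX\mapsto -i\bX$ to convert to the symmetric case. The hypothesis $\sigma^*\tilde\bX=-\tilde\bX$ forces the Taylor coefficients of $\tilde\bX$ to be purely imaginary, so $-i\tilde\bX$ has real coefficients and is symmetric. The first part then yields a real analytic $\phi$ conjugating $-i\tilde\bX$ to a normal form of type $\frac{P(x;\lambda)}{c_{\textrm{sym}}+\nu(\lambda)x^k}\pp{x}$ with $c_{\textrm{sym}}=(\pm 1)^{k+1}$ and real $y_j,\mu,y_{2k+1}$. Multiplying back by $i$ and using the identity
\[
i\cdot\frac{P(x)}{c_{\textrm{sym}}+\nu x^k}=\frac{P(x)}{-ic_{\textrm{sym}}+(-i\nu)x^k}
\]
presents the normal form of $\tilde\bX$ in the shape $\bX_3'$ with $c=-ic_{\textrm{sym}}$ and $\nu'=-i\nu$, so that $c\mu, cy_{2k+1}\in\R$; after absorbing a sign by the linear substitution $x\mapsto\pm x$ (available when $k$ is odd, since then the prenormal form fixes the leading coefficient up to an arbitrary nonzero scalar), this matches $c=i(\pm 1)^{k+1}$. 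The forms $\bX_1'$ and $\bX_2'$ are obtained in exactly the same way.

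The commutation $\phi\circ\sigma=\sigma\circ\phi$ is automatic in both cases, since in each one the same real analytic $\phi$ is produced by the symmetric normalization step. I expect the main technical point to be the interaction between this symmetry and the uniqueness ambiguity: two normalizing maps $\phi_1,\phi_2$ are related by $\phi_2=e^{2\pi il/k}\exp(t(\lambda)\bX)\circ\phi_1$ according to Theorem~\ref{theorem:Kostov}(ii), and one must check that this freedom can be used consistently so that the chosen $\phi$ commutes with $\sigma$. Applying $\sigma$ to both sides of the uniqueness identity, and using reversibility in the form $\sigma\circ\exp(s\bX)\circ\sigma=\exp(-\bar s(\bar\lambda)\bX)$, forces $t(\lambda)$ to be real (resp.\ purely imaginary in the reversible case) and constrains $l\in\Z_k$ to be compatible with the chosen symmetric element of the $\Z/k\Z$-action on the normal form. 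This bookkeeping is routine but is the place where care is needed.
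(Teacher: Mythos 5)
Your proposal is correct and follows exactly the route the paper intends (the paper in fact states this corollary without writing out a proof): symmetric means the Taylor coefficients are real, so Theorem~\ref{theorem:Kostov-real} applies directly and real analyticity of the normalizing map is the required commutation with $\sigma$, while the reversible case reduces to the symmetric one by the rescaling $\bX\mapsto -i\bX$, with the sign of $c$ adjusted by $x\mapsto -x$ when $k$ is odd. Your closing remarks on tracking the $\Z/k\Z$ and time-flow ambiguity through $\sigma$ correctly identify the only bookkeeping the paper leaves implicit.
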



\begin{theorem}[Formal theory]\label{theorem:Kostov-formal}~
	\begin{enumerate}[leftmargin=\parindent, itemindent=0em, itemsep=4pt, topsep=4pt, partopsep=0pt, parsep=0pt]
		\item 	The statement of Theorem~\ref{theorem:Kostov} and therefore of Theorem~\ref{thm1} is also true in the formal setting, of formal parametric germs of vector fields and formal transformations \eqref{eq:transformation}, where by formal we mean a formal power series in $(x,\lambda)$. 
		In the formal real case (i.e. the series have real coefficients), then the normal form is given by \eqref{Kostov_form-real}.
		
		\item \textnormal{(Ribon \cite[Proposition 6.1]{Ribon-f})}
		Two \emph{analytic} germs of vector fields $\bX,\bX'$ are formally conjugate if and only if they are analytically conjugate.	
		
		Moreover, denoting $\hat{\Cal I}_\lambda$ the ideal of formal series that vanish when $\lambda=0$, if $\hat\phi(x,\lambda)$ is a formal conjugating transformation, then for any $n>0$ there exists an analytic conjugacy $\phi_n(x,\lambda)$, $\phi_n^*\bX'=\bX$, such that $\phi_n=\hat\phi\mod\hat{\Cal I}_\lambda^n$.
	\end{enumerate}
\end{theorem}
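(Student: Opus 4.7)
My plan is to revisit the three ingredients of the analytic proof of Theorem~\ref{theorem:Kostov}---Proposition~\ref{prop:prenormal}, Lemma~\ref{lemma}, and Proposition~\ref{prop:normalisation}---together with the infinite-descent uniqueness argument, and check that each has a formal analogue. The uniqueness argument is the easiest: it already proceeds modulo powers of $\Cal I_\lambda$, manipulates only formal expansions of $\psi$, $P$ and $P'$, and nowhere invokes convergence, so it transports verbatim after replacing $\Cal I_\lambda$ by $\hat{\Cal I}_\lambda$. For the existence side, Proposition~\ref{prop:prenormal} rests on Weierstrass preparation/division (whose formal version is classical) and on Lemma~\ref{lemma}: the auxiliary $\bY=\pp{t}-\tfrac{\alpha\bX_0}{1+t\bX_0.\alpha}$ is a formal perturbation of $\pp{t}$ whose spatial component lies in the maximal ideal at $(x,\lambda)=0$, so its time-$1$ flow can be built order by order in the $(x,\lambda)$-adic filtration via Picard iteration, with $t\in[0,1]$ treated as a parameter. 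Proposition~\ref{prop:normalisation} reduces to solving the linear system $A(t,y;u,\mu)[\xi,h]^\top=b(y;u)$; since $A$ is invertible at $y=u=0$, I would substitute the formal values $y(\lambda),u(\lambda),\mu(\lambda)$, invert $A$ formally, and apply the formal flow construction as above. The real formal case is handled by tracking reality of coefficients through each step, exactly as in Theorem~\ref{theorem:Kostov-real}.

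\textbf{Plan for part~(2).} The ``if'' direction is immediate. For the converse, starting from analytic $\bX,\bX'$ and a formal $\hat\phi$ with $\hat\phi^*\bX'=\bX$, I would first apply the analytic Theorem~\ref{theorem:Kostov} to obtain analytic normalizations $\alpha\colon\bX\to N$ and $\alpha'\colon\bX'\to N'$. The composite $\hat\sigma:=\alpha'\circ\hat\phi\circ\alpha^{-1}$ is then a formal conjugacy from $N$ to $N'$. By part~(1) applied to $\hat\sigma$, the normal forms $N$ and $N'$ must coincide after adjusting $\alpha'$ by a $\Z_k$-rotation, and
\[
\hat\sigma(x,\lambda)=e^{2\pi il/k}\,\exp\!\bigl(\hat t(\lambda)\,N\bigr)(x,\lambda)
\]
for some $l\in\Z_k$ and a formal germ $\hat t(\lambda)\in\C[[\lambda]]$. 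In particular $\alpha'^{-1}\circ e^{2\pi il/k}\circ\alpha$ is already an \emph{analytic} conjugacy between $\bX$ and $\bX'$. For the refinement, let $t_n(\lambda)$ be the polynomial truncation of $\hat t(\lambda)$ modulo $\hat{\Cal I}_\lambda^n$, and set
\[
\phi_n:=\alpha'^{-1}\circ\bigl(e^{2\pi il/k}\exp(t_n(\lambda)N)\bigr)\circ\alpha.
\]
Since $\exp(t_n(\lambda)N)$ is the time-$t_n(\lambda)$ flow of $N$, it is a self-conjugacy of $N$, so $\phi_n^*\bX'=\bX$; since the flow of $N$ depends analytically on $t$ and $t_n$ is polynomial, $\phi_n$ is analytic; and since $\hat t-t_n\in\hat{\Cal I}_\lambda^n$, the congruence $\phi_n\equiv\hat\phi\mod\hat{\Cal I}_\lambda^n$ follows directly.

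\textbf{Main obstacle.} The delicate point is the passage to \emph{formal flows} in Lemma~\ref{lemma} and Proposition~\ref{prop:normalisation}: the analytic proofs rely on the fact that the flow exists on the compact interval $[0,1]$ through smallness estimates and an analytic implicit-function argument, whereas the formal proofs must justify the flow purely by a termwise construction in the appropriate $(x,\lambda)$-adic (respectively $(y,u,\mu)$-adic) topology. This is a bookkeeping verification rather than a conceptual difficulty, but it deserves to be spelled out carefully: once one observes that $\bY=\pp{t}+\bZ$ with $\bZ$ in the correct maximal ideal, its time-$1$ flow defines a well-behaved element of the completed formal ring, and the remainder of the analytic argument transfers without modification.
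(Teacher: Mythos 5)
Your proposal is correct and follows essentially the same route as the paper: part (1) by re-running the analytic proof with the well-definedness of the formal flow as the one new ingredient (the paper isolates this as a lemma, justifying it via jet truncations converging in the Krull topology, which is equivalent to your order-by-order Picard construction in the adic filtration), and part (2) by passing to analytic normal forms, invoking uniqueness to write the formal conjugacy as a rotation composed with a formal time-$\hat t(\lambda)$ flow, and truncating $\hat t$ modulo $\hat{\Cal I}_\lambda^n$.
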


The second statement is an analogue of the Artin approximation theorem.

\begin{proof}
	\begin{enumerate}[leftmargin=\parindent, itemindent=0em, itemsep=4pt, topsep=4pt, partopsep=0pt, parsep=0pt]
		\item The proof follows exactly the same lines. The key fact is that a formal flow map of a formal vector field 
		\[
		\hat\bY=\pp{t}+\sum_{j=0}^{k-1}\hat\xi_j(t,y;u,\mu)\pp{y_j}+\hat F(x,t,y;u,\mu)\pp{x},
		\] 
		which is analytic in $t$ and the parameters $(u,\mu)$ is well defined: see Lemma~\ref{lemma:formalflow} below.
		
		\item This is a consequence of the uniqueness of the normal form \eqref{Kostov_form}: each analytic germ of parametric vector field is analytically conjugate to a normal form \eqref{Kostov_form}, and two such normal forms are formally conjugate if and only if they are conjugate by a rotation $x\mapsto e^{2\pi i\frac{l}{k}}x$, $l\in\Z_k$, which is analytic. 	
		 Moreover, the formal conjugacy is a composition of the rotation and of a formal time $\hat t(\lambda)$-flow map of the vector field. Replacing $\hat t(\lambda)$ with an analytic $t_n(\lambda)=\hat t(\lambda)\mod\hat{\Cal I}_\lambda^n$ does the trick. \qedhere
	\end{enumerate}
\end{proof}

\begin{lemma}\label{lemma:formalflow}
	Let  $\hat\bY=\tpp{t}+\hat\bZ(z,t)$, where $\hat\bZ(z,t)$ be a formal vector field in $z\in\C^p$ with coefficients entire in $t\in\C$, that vanishes at $z=0$: $\hat\bZ(0,t)=0$.
	Then $\hat\bY$ has a well-defined formal flow $z\circ\hat\exp(s\hat\bY)=\sum_{n=0}^{+\infty}\frac{s^n}{n!}\hat\bY^n.z$ for all $(s,t)\in\C^2$.
\end{lemma}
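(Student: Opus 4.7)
The plan is to construct the formal flow $\Phi(z,t,s) := z\circ\hat\exp(s\hat\bY)$ directly as a formal power series in $z$ whose coefficients are entire functions of $(t,s)\in\C^2$, by integrating the flow equation $\partial_s\Phi = \hat\bZ(\Phi,t+s)$, $\Phi|_{s=0}=z$, order-by-order in $z$; only at the end do I identify $\Phi$ with the iterated-derivative series $\sum_{n\geq 0}\tfrac{s^n}{n!}\hat\bY^n.z$ by recognising the latter as the Taylor expansion of $\Phi$ in $s$.

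Write $\hat\bZ = \sum_{i=1}^p \hat Z_i(z,t)\pp{z_i}$ with $\hat Z_i(z,t) = \sum_{|\beta|\geq 1} b_{i,\beta}(t)\, z^\beta$, each $b_{i,\beta}$ entire on $\C$, and seek $\Phi_i = \sum_{|\alpha|\geq 1}\phi_{i,\alpha}(t,s)\,z^\alpha$ with initial data $\phi_{i,e_j}(t,0)=\delta_{ij}$ and $\phi_{i,\alpha}(t,0)=0$ for $|\alpha|\geq 2$. Substituting into the flow ODE and reading off the coefficient of $z^\alpha$ yields, thanks to the hypothesis $\hat\bZ(0,t)=0$, a triangular system
\[
\partial_s\phi_{i,\alpha}(t,s) = \sum_{j=1}^p b_{i,e_j}(t+s)\,\phi_{j,\alpha}(t,s) + R_{i,\alpha}\bigl(t,s;\,\{\phi_{j,\gamma}\}_{|\gamma|<|\alpha|}\bigr),
\]
where $R_{i,\alpha}$ is a polynomial (with coefficients entire in $(t,s)$) in strictly lower-degree flow coefficients: a factor $\Phi^\beta$ contributes to the monomial $z^\alpha$ only when $|\beta|\leq|\alpha|$, and when $|\beta|\geq 2$ each factor automatically has degree $<|\alpha|$, so the only diagonal contribution is the linear one written above. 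Induction on $|\alpha|$ then reduces the problem at each step to a linear inhomogeneous ODE in $s$ with coefficients entire on $\C^2$, and Picard iteration on any compact (convergence uniform in $(t,s)$, each iterate being holomorphic) produces a unique solution $\phi_{i,\alpha}$ entire on $\C^2$.

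Once $\Phi$ is constructed, differentiating the flow equation in $s$ and evaluating at $s=0$ gives $\partial_s^n\Phi|_{s=0} = \hat\bY^n.z$ coefficient by coefficient, so the Taylor expansion at $s=0$ of each entire function $\phi_{i,\alpha}(t,s)$ agrees term-by-term with the coefficient of $z^\alpha$ in $\sum\tfrac{s^n}{n!}\hat\bY^n.z$; by entireness, this Taylor series converges for every $(s,t)\in\C^2$ and represents $\Phi$, which is what the lemma asserts. The main obstacle I expect is the combinatorial set-up of the triangular decomposition — verifying rigorously that the coefficient of $z^\alpha$ in $\hat\bZ(\Phi,t+s)$ depends only on $\{\phi_{j,\gamma}\}_{|\gamma|\leq|\alpha|}$, with a purely \emph{linear} diagonal part at degree $|\alpha|$. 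Once this point (a direct consequence of $\hat\bZ(0,t)=0$) is in place, the remainder reduces to the classical fact that linear ODEs with entire coefficients admit entire solutions.
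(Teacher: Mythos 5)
Your argument is correct, and it takes a more hands-on route than the paper. The paper truncates $\hat\bY$ to its $n$-jet in $z$, which is a polynomial (hence analytic) vector field with a genuine flow fixing $z=0$, and then asserts that the $m$-jets of these flows stabilize as $n$ grows, so that the formal flow exists as a Krull limit; the stabilization claim itself is stated without computation, with a pointer to Ilyashenko--Yakovenko. You instead integrate the flow equation $\partial_s\Phi=\hat\bZ(\Phi,t+s)$ degree by degree in $z$, and the heart of your proof --- that because $\hat\bZ(0,t)=0$ the coefficient of $z^\alpha$ satisfies a finite linear inhomogeneous ODE whose source term involves only coefficients of strictly lower degree --- is precisely the mechanism behind the jet stabilization that the paper takes for granted. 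So your version is more self-contained and elementary (it needs only the existence of entire solutions to linear ODEs with entire coefficients, rather than the local flow theory of analytic vector fields), at the cost of the combinatorial bookkeeping of the triangular system; the paper's version is shorter because it outsources existence to the classical analytic theory and only has to observe that jets of flows depend only on jets of fields. Your final step identifying $\Phi$ with $\sum_n\frac{s^n}{n!}\hat\bY^n.z$ via $\partial_s^n\Phi|_{s=0}=\hat\bY^n.z$ and entireness of each coefficient $\phi_{i,\alpha}(t,\cdot)$ is also needed and correctly handled, since the lemma is stated in terms of that exponential series; the paper leaves this identification implicit. The only point worth tightening is the joint holomorphy of the Picard limit in $(t,s)$, which follows from uniform convergence on compacts of $\C^2$ together with holomorphy of each iterate --- a one-line remark, but one you should make explicit.
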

\begin{proof}
	For any $n\in\Z_{\geq 0}$, the $n$-jet with respect to the variable $z$ of $\hat\bY$ is an entire vector field $j_z^n\hat\bY(z,t)$ in $\C^p\times\C$ with well defined flow $z\circ\exp(s\, j_z^n\hat\bY)$ fixing the origin in $z$.
	For any $m\leq n$, the $m$-jet of this flow agrees with the $m$-jet of the one for $m$: 
	$j_z^m\big(z\circ\exp(s\,j_z^n\hat\bY)\big)=j_z^m\big(z\circ\exp(s\,j_z^m\hat\bY)\big)$,
	meaning that they converge in the Krull topology as $n\to+\infty$ to a well-defined formal flow map $z\circ\hat\exp(s\hat\bY)$.	
	See also \cite[Theorem~3.9]{IlYa}.
\end{proof}

\begin{theorem}[$\Cal C^\infty$-smooth theory, Kostov \cite{Kostov-real}]\label{theorem:Kostov-real}~\\
 In real $\Cal C^\infty$-smooth setting, the deformation $\bX_{3,\text{real}}(x;y)$ \eqref{nf_4} is a versal deformation of the normal form vector field $\bX_{3,\text{real}}(x;0)$.
\end{theorem}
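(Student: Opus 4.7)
The plan is to mirror the analytic existence argument of Propositions~\ref{prop:prenormal} and~\ref{prop:normalisation} step by step, substituting each analytic tool by its $\Cal C^\infty$ counterpart. Since only versality is asserted, the delicate infinite-descent uniqueness argument of Theorem~\ref{theorem:Kostov}(ii) can be skipped entirely; indeed the counterexample from the introduction shows that uniqueness genuinely fails in the smooth setting.

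First I would produce a smooth prenormal form analogous to Proposition~\ref{prop:prenormal}. The key input is the Malgrange--Mather preparation theorem, the $\Cal C^\infty$ analogue of Weierstrass preparation and division, which is valid for families with parameters provided the central fibre has finite order. Since $\tilde\bX(x,0)$ vanishes to order $k+1$ at $x=0$, it produces a factorization
\[
\tilde\bX(x,\lambda)=\frac{P(x,\lambda)}{Q(x,\lambda)+P(x,\lambda)R(x,\lambda)}\pp{x},
\]
with smooth data $P(x,\lambda)=x^{k+1}+y_{k-1}(\lambda)x^{k-1}+\ldots+y_0(\lambda)$, $Q(x,\lambda)=(\pm 1)^{k+1}+u_0(\lambda)+\ldots+u_{k-1}(\lambda)x^{k-1}+\mu(\lambda)x^k$, and smooth remainder $R(x,\lambda)$. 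Lemma~\ref{lemma}, whose proof relies only on smoothness of ODE flows, then eliminates the $R$-term and brings the family to the prenormal form \eqref{pol_form} with the real normalization of the constant coefficient of $Q$.

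Next I would run the construction of Proposition~\ref{prop:normalisation} in the smooth category to kill the auxiliary parameters $u_0(\lambda),\ldots,u_{k-1}(\lambda)$. The homological equation \eqref{homological_eq} amounts to inverting a matrix $A(t,y;u,\mu)$ that is polynomial in its entries and invertible at $(y,u)=(0,0)$; it therefore remains smoothly invertible on a full neighborhood, yielding a smooth vector field $\bY$ whose flow is well-defined for $t\in[0,1]$. The time-one flow is a smooth change of variables $(x,y)\mapsto(\phi(x,y;u,\mu),\psi(y;u,\mu))$ tangent to the identity that brings $\tilde\bX$ to a family of the form $\bX_{3,\text{real}}$ with parameters $(Y(\lambda),M(\lambda))$ depending smoothly on $\lambda$ and satisfying $Y(0)=0$. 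This exhibits $\tilde\bX$ as equivalent to the family induced from $\bX_{3,\text{real}}(x;y)$ by the smooth map $\lambda\mapsto(Y(\lambda),M(\lambda)-\mu(0))$, which is exactly versality.

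The main obstacle is checking that the parametric version of the $\Cal C^\infty$ preparation theorem applies in the required form. The hypothesis reduces precisely to $\tilde\bX(x,0)$ having finite order $k+1$ at $x=0$, which is our standing assumption; once this is granted, the rest of the proof is a routine smoothness check for solutions of smooth ODEs with smoothly invertible coefficient matrices. In effect, the argument simply observes that analyticity was never used in the existence part of the analytic proof, only in the uniqueness argument where the identity principle was invoked through infinite descent.
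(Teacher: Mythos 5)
Your proposal is correct and follows essentially the same route as the paper, which likewise observes that the only genuinely analytic ingredients in the existence part (Propositions~\ref{prop:prenormal} and~\ref{prop:normalisation}) are the Weierstrass preparation and division theorems, and replaces them with the Malgrange preparation and division theorems while noting that the remaining flow and matrix-inversion arguments go through smoothly. Your write-up is merely more detailed than the paper's one-line justification, and correctly drops the uniqueness argument, which fails in the $\Cal C^\infty$ setting.
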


\begin{proof} The only purely analytic tools used in the proof of the existence of a normalizing transformation were the Weierstrass preparation and division theorems (used in the proof of Proposition~\ref{prop:normalisation}), which have their counterpart in the $\Cal C^\infty$-setting in the Malgrange preparation and  division theorems \cite{Malgrange, GG}. 
\end{proof}

The deformation \eqref{nf_4} is not universal in the  $\Cal C^\infty$-setting in general. 
The issue is the non-uniqueness in the Malgrange division and the lack of control over the potential non-real singularities in the family.

\begin{example}
The deformations
$\bX(x;\lambda)=(x^2+\lambda^2)\tpp{x}$, and
$\bX'(x;\lambda)=\big(x^2+(\lambda+\omega(\lambda))^2\big)\tpp{x}$, where $\omega(\lambda)$ is infinitely flat at $\lambda=0$ 
(i.e. $\big(\tpp{\lambda}\big)^n\omega\big|_{\lambda=0}=0$, for all $n\in\Z_{\geq 0}$),
are $\Cal C^\infty$-equivalent by means of a conjugacy $\phi(x;\lambda)$ with $\Delta(x;\lambda):=\phi(x;\lambda)-x$ infinitely flat along $\lambda=0$.

Indeed we first change $x\mapsto \frac{\lambda+\omega(\lambda)}{\lambda}x =C(\lambda)x$ into $\bX'$, thus transforming it into 
$\bX''(x;\lambda)=\frac1{C(\lambda)}\bX(x;\lambda)$. Note that $C(\lambda) = 1+\frac{\omega(\lambda)}{\lambda}= 1+\mu(\lambda)$, with $\mu(\lambda)$ infinitely flat. 
We then apply Lemma~\ref{lemma}. We look for a germ $\alpha(x;\lambda)$ such that 
$1+\bX.\alpha= C(\lambda)$, which is equivalent to $(x^2+\lambda^2)\tpp{x}\alpha= \mu(\lambda)$. This equation has the odd solution
$$\begin{cases}
\alpha(x;\lambda) = \frac{\mu(\lambda)}{\lambda} \arctan \frac{x}{\lambda},&\lambda\neq0,\\
0, &\lambda=0.\end{cases}$$
The function $\alpha$ is obviously $\Cal C^\infty$ since $\arctan\frac{x}{\lambda}$ is bounded, and each derivative of $\arctan\frac{x}{\lambda}$  grows no faster than $(x^2+\lambda^2)^{-n}<\lambda^{-2n}$ for some $n$ depending on the derivative.
The flow of the vector field 
\[\bY=\pp{t}-\frac{\alpha\bX}{1+t\bX.\alpha}=
\pp{t}-\frac{\alpha(x,\lambda)(x^2+\lambda^2)}{1+t\mu(\lambda)}\pp{x}\]
is well defined and $\Cal C^\infty$-smooth for $t\in[0,1]$ as long as $|\mu(\lambda)|<1$. 
\end{example}

\begin{remark} The deformations
$\bX(x;\lambda)=(x^2-\lambda^2)\tpp{x}$, and
$\bX'(x;\lambda)=\big(x^2-(\lambda+\omega(\lambda))^2\big)\tpp{x}$, where $\omega(\lambda)$ is infinitely flat at $\lambda=0$, are not $\Cal C^\infty$-conjugate. Indeed, the eigenvalues at the singular points are $\Cal C^1$ invariants. 
\end{remark}

\begin{problem}
Can we expect uniqueness   of the induced coefficients in \eqref{Kostov_form-real} in the special case when the deformation is such that it has $k+1$ merging real singularities when counted with multiplicity?
\end{problem}

\bibliographystyle{alpha}
\footnotesize

\end{document}